\newcommand\cyr{%
\renewcommand\rmdefault{wncyr}%
\renewcommand\cFdefault{wncyss}%
\renewcommand\encodingdefault{OT2}%
\normalfont \selectfont} \DeclareTextFontCommand{\textcyr}{\cyr}
\newcommand{\be}{\begin{equation}}
\newcommand{\ee}{\end{equation}}
\newcommand{\bes}{\begin{equation*}}
\newcommand{\ees}{\end{equation*}}
\newcommand{\C}{\mathbb{C}}
\newcommand{\K}{\mathbb{K}}
\newcommand{\N}{\mathbb{N}}
\newcommand{\R}{\mathbb{R}}
\newcommand{\cF}{\mathcal{F}}
\newcommand{\cW}{\mathcal{W}}
\newcommand{\sF}{\mathscr{F}}
\newcommand{\sfA}{\mathsf{A}}
\newcommand{\sfE}{\mathsf{E}}
\newcommand{\sfF}{\mathsf{F}}
\newcommand{\sfU}{\mathsf{U}}
\newcommand{\sfV}{\mathsf{V}}
\newcommand{\sfX}{\mathsf{X}}
\newcommand{\sfY}{\mathsf{Y}}
\newcommand{\fF}{\mathfrak{F}}
\newcommand{\id}{\mathrm{id\,}}
\newcommand{\nl}{\vskip 10pt\noindent}
\renewcommand{\rmdefault}{cmr} 
\newtheorem{theorem}{Theorem}[section]
\theoremstyle{definition}
\newtheorem{definition}[theorem]{Definition}
\newtheorem{example}[theorem]{Example}
\DeclareMathOperator\Lip{Lip}
\DeclareMathOperator\Int{int}
\DeclareMathOperator\Diff{Diff}
\DeclareMathOperator\Hom{Hom}
\newcommand{\sH}{\mathscr{H}}
\newcommand{\oL}{\overline{L}}
\newcommand{\n}[1]{{\left\|{#1}\right\|}}
\newcommand{\vertiii}[1]{{\left\vert\kern-0.25ex\left\vert\kern-0.25ex\left\vert #1 
    \right\vert\kern-0.25ex\right\vert\kern-0.25ex\right\vert}}
\newcommand{\one}{1\hspace{-0.23em}\mathrm{l}}
\theoremstyle{remark}
\newtheorem{remark}[theorem]{Remark}
\newtheorem{corollary}[theorem]{Corollary}
\numberwithin{equation}{section}
\newcommand{\abs}[1]{\lvert#1\rvert}
\begin{document}

\title{Fractal Interpolation over Curves}

\author{Peter R. Massopust}
\address{Centre of Mathematics, Technical University of Munich, Boltzmannstrasse 3, 85747 Garching b. M\"unchen, Germany}
\email{massopust@ma.tum.de}


\subjclass{Primary 28A80; Secondary 46B25, 46E15, 51F30}
\date{January 1, 1994 and, in revised form, June 22, 1994.}


\keywords{Iterated function system (IFS), fractal interpolation, Read-Bajraktarevi\'{c} operator, fractal function, Banach algebra, Lipschitz algebra}

\begin{abstract}
This paper introduces the novel concept of fractal interpolation over curves in Banach spaces. The contents are based on the usual methodologies involving the fractal interpolation problem over intervals but the current approach considerably extends and generalizes them and paves the way to fractal interpolation over Banach submanifolds.
\end{abstract}

\maketitle



\section{Introduction}
Fractal interpolation techniques and methodologies have been considered extensively over the last decades in order to obtain approximation results for immensely complex and complicated geometries such as fractal sets or sets modelled as fractal sets. Common to all these techniques is the existence or the approximate assumption of an underlying self-referential structure of the (approximate) fractal set. 

For many application, the (approximate) fractal set is the graph of a function defined on an appropriate subset of a, say, Banach space with values in another Banach space. In other words, one tries to solve the Fractal Interpolation Problem: Given a bounded subset $\sfX$ of a Banach space $\sfE$ and a Banach space $\sfF$, construct a \emph{global} function $\psi:\sfX = \coprod\limits_{i=1}^n \sfX_i\to\sfF$ belonging to some prescribed function space $\sF:= \sF(\sfX,\sfF)$ satisfying $n$ functional equations of the form
\[
\psi (h_i (x)) = q_i (x) + s_i (x) \psi (x), \quad\text{on $\sfX$ and for $i\in \N_n$}.
\]
The functions $h_i$ are assumed to partition $\sfX$ (linearly or nonlinearly) into disjoint subsets $\sfX_i = h_i(\sfX)$, and the functions $q_i$ and $s_i$ are appropriately chosen. We remark that, if such a global solution exists, then it is pieced together in a prescribed manner from copies of itself on the subsets $\sfX_i$. This latter property then defines or expresses the self-referential nature of $\psi$ and communicates the fact that the graph of $\psi$ is in general an object of immense geometric complexity, i.e., a fractal set.

The afore-mentioned fractal interpolation problem has been investigated in numerous publications and for different function spaces $\sF$ where the domains of $f\in \sF$ were one-dimensional, $n$-dimensional, or infinite-dimensional. For specific applications though, $\sfX$ is usually a compact or half-open bounded interval of the numerical Banach space $\R$ with values in $\sfF$ or even just $\R^m$, $m\in\N$. 

However, in many situations especially in physics and particularly electrodynamics, complex (approximately) self-referential structures may arise over domains which are curves or more generally (Banach) submanifolds. In this paper, we initiate the investigation in this latter setting and introduce fractal interpolation where the domain $\sfX$ is the trace of a regular curve $\gamma :I \to \sfE$ in a Banach space. (For a proper choice of the interval $I$ the trace is then a $C^1$ Banach submanifold of codimension one.) The set-up presented here will then indicate the general approach to fractal interpolation over (Banach) submanifolds and will be left to a follow-up paper.

The contents of this paper are organized as follows. In Section 2, iterated function systems are defined and the notation and terminology for the remainder of the paper introduced. In the next section, some remarks about the code space naturally associated with an IFS are presented. Section 4 reviews some concepts from the theory of curves in Banach spaces highlighting those results that are needed in this paper. In Section 5, the novel concept of fractal interpolation over curves in Banach spaces is introduced and an appropriate function space $\sF$, namely a certain Lipschitz algebra, chosen. The main result, namely the existence of a unique solution to the fractal interpolation problem over curves, is proven in this section as well. In the next section, a relation between the unique solution of the interpolation problem and the underlying IFS and its code space is exhibited. The final section lists some future research directions.

\section{Iterated Functions Systems}
%

Let $\sfE:=(\sfE, \n{\cdot}_\sfE)$ and $\sfF:=(\sfF, \n{\cdot}_\sfF)$ be Banach spaces (over $\R$ or $\C$). For a map $f: \sfE \to \sfF$, we define the Lipschitz constant associated with $f$ by
\[
L (f) = \sup_{x\in\sfE, x \neq 0} \frac{\n{f(x)-f(y)}_\sfF}{\n{x-y}_\sfE}.
\]
A map $f$ is called \emph{Lipschitz} or a \emph{Lipschitz function} if $L (f) < + \infty$ and a \emph{contraction} if $L (f) < 1$.

\begin{definition}
Let $(\sfE,\n{\cdot}_\sfE$ be a Banach space and $\cF$ a finite set of functions $\sfE\to\sfE$. Then, the pair $(\sfE,\cF)$ is called an iterated function system (IFS) on $\sfE$. In case all maps $f\in\cF$ are contractions then $(\sfE,\cF)$ is called a \emph{contractive} IFS.
\end{definition}

For a more general definition of IFS and related topics, we refer the interested reader to \cite{BWL14}.

\begin{remark}
Throughout this paper, we deal with contractive IFSs and therefore we drop the adjective ``contractive."
\end{remark}

With the finite set of contractions $\cF$ on $\sfE$, one associates a set-valued operator, again denoted by $\cF$, acting on the hyperspace ${\sH}(\sfE)$ of nonempty compact subsets of $\sfE$:
\[
\cF (E) := \bigcup_{f\in \cF} f (E),\qquad E\in \sH(\sfE).
\]
The hyperspace ${\sH}(\sfE)$ endowed with the Hausdorff-Pompeiu metric $d_{\sH}$ defined by
\[
d_{\sH} (S_1, S_2) := \max\{d(S_1, S_2), d(S_2, S_1)\},
\]
where $d(S_1,S_2) := \sup\limits_{x\in S_1} d(x, S_2) := \sup\limits_{x\in S_1}\inf\limits_{y\in S_2} \n{x-y}_\sfE$, becomes a metric space.

It is a known fact that the completeness of $\sfE$ implies the completeness of $({\sH}(\sfE), d_{\sH})$ as a metric space. Moreover, it can be shown that the set-valued operator $\cF$ is contractive on the complete metric space $({\sH}(\sfE), d_{\sH})$ with Lipschitz constant $L(\cF) = \max\{L (f) : f\in \cF\} < 1$ if all $f\in \cF$ are contractions. 

In this case and by the Banach Fixed Point Theorem, $\cF$ has a unique fixed point in $\sH(\sfE)$. This fixed point is called the \emph{attractor} of or the \emph{fractal (set)} generated by the IFS $(\sfE,\cF)$. The attractor or fractal $F$ satisfies the self-referential equation
\be\label{fixedpoint}
F = \cF (F) = \bigcup_{f\in\cF} f (F),
\ee
i.e., $F$ is made up of a finite number of images of itself. Eqn. \eqref{fixedpoint} reflects the fractal nature of $F$ showing that it is as an object of immense geometric complexity.

The proof of the Banach Fixed Point Theorem also shows that the fractal $F$ can be obtained iteratively via the following procedure: Choose an arbitrary $F_0\in {\sH}(\sfE)$ and define
\be\label{F}
F_n := \cF (F_{n-1}),\qquad n\in\N.
\ee
Then $F =\lim\limits_{n\to\infty} F_n$, where the limit is taken with respect to the Hausdorff-Pompeiu metric $d_{\sH}$.

For more details about IFSs and fractals and their properties, we refer the interested reader to the large literature on these topics and list only two references \cite{B12,M16} which are closely related to the present exhibition.
\section{Some Remarks about IFSs and Their Code Space}
In this short section, we give a brief overview of the important concept of code space associated with an IFS. As a reference, we offer \cite{B12,BBHV16}.

Let $(\sfE, \cF)$ be an IFS with $\#\cF = n$. With an IFS one associates the code space $\N_n^\infty$, i.e., the compact space of all infinite sequences of the form $\boldsymbol{i} := (i_1 i_2 i_3\ldots )$ whose elements are from $\N_n$. The set $\N_n^\infty$ is endowed with a metric $\delta$ defined by
\[
\delta (\boldsymbol{i},\boldsymbol{j}) := 2^{-k},
\]
where $k$ is the least integer such that $i_k\neq j_k$. This makes $(\N_n^\infty,\delta)$ into a compact metric space.

Define maps
\begin{gather*}
s_i:\N_n^\infty\to\N_n^\infty,\\
s_i(\boldsymbol{j}) := i\,\boldsymbol{j}. 
\end{gather*}
Then each $s_i$ is a contraction with Lipschitz constant $L(s_i) = \frac12$ and a homeomorphism onto its image. In other words, the pair $(\N_n^\infty, s)$ with $s := \{s_i : i\in \N_n\}$ is an IFS referred to as the code space IFS.

For an IFS $(\sfE, \cF)$ with maps $\cF = \{f_1, \ldots, f_n\}$ and attractor $F$, define the code map 
\begin{gather*}
\pi : \N_n^\infty\to F,\\
\pi(\boldsymbol{i}) := \lim_{k\to\infty} f_{i_1}\circ f_{i_2}\circ \cdots \circ f_{i_k} (x),
\end{gather*}
for a fixed $x\in F$ and all $\boldsymbol{i} = i_1 i_2 \ldots \in \N_n^\infty$. 

It is known that for contractive IFSs, $\pi(\boldsymbol{i})$ is one-elemental and independent of $x\in F$. Furthermore, $\pi$ is continuous and surjective. If
\begin{gather*}
S_i:\N_n^\infty\to\N_n^\infty,\\
S_i (i_1 i_2 i_3 \ldots) := (i_2 i_3 \ldots)
\end{gather*}
denotes the left shift operator on $\N_n^\infty$, then $\pi$ is a conjugation between the code space IFS $(\N_n^\infty, \sigma)$ and the IFS $(\sfE, \cF)$ in the sense that
\[
\pi\circ\sigma_i = f_i\circ\pi\quad\text{and}\quad \pi\circ S_i \in \cF^{-1}\circ \pi,
\]
where $\cF^{-1}(E) := \bigcup\limits_{i\in\N_n} f_i^{-1} (E)$, $E\in \sH(\sfE)$.

In order to introduce the concept of fractal transformation, we require one more item, namely, section of the code map $\pi$. A mapping $\sigma: F\to \N_n^\infty$ is called a section of $\pi$ if $\pi\circ\sigma = \id_F$.

Now suppose, two IFSs $(\sfE, \cF_1)$ and $(\sfE, \cF_2)$ with $\#\cF_1 = \#\cF_2$ are given. Denote by $F_1$ and $F_2$, respectively, their attractors. The fractal transformations $\fF_{12} : F_1 \to F_2$ and  $\fF_{21} : F_2 \to F_1$ are defined by
\[
\fF_{12} := \pi_2\circ\sigma_1\quad\text{and}\quad \fF_{21} := \pi_1\circ\sigma_2.
\]
Their respective actions are best seen by the commutative diagram below.
\[
\begin{tikzcd}
F_1 \arrow[dr, shift left=.5ex, "\sigma_1"] \arrow[rr, shift left=0.5ex,"\fF_{12}"] \arrow[rr, leftarrow, shift right=0.5ex, swap, "\fF_{21}"] && F_2 \arrow[dl, shift left=.5ex, "\sigma_2"]\\
& \N_n^\infty \arrow[ul, shift left=.5ex, "\pi_1"] \arrow[ur, shift left=.5ex, "\pi_2"]
\end{tikzcd}
\]
If $\fF_{12}$ is a homeomorphism then it is called a fractal homeomorphism and thus $\fF_{21} = (\fF_{12})^{-1}$.
\section{Curves in Banach Spaces}
In this section, we briefly introduce the concept of a curve in a normed linear space, in particular, a Banach space.
\begin{definition}
Let $(\sfV, \n{\cdot}_\sfV)$ be a normed linear space over $\R$. Let $I\subset\R$ be an interval and $\gamma:I\to V$ a continuous mapping. Then the pair $(I, \gamma)$ is called a curve in $V$. The family of all curves on a given Interval $I$ is denoted by $\Gamma(I, \sfV)$.
\end{definition}
Note that if $\sfV$ is finite dimensional, i.e., isomorphic to $\R^m$, for some $m\in \N$, then a curve can be interpreted as a vector-valued function $\gamma: I\to\R^m$,
\be\label{vecval}
\gamma(t) = \begin{pmatrix} \gamma_1 (t)\\ \vdots \\ \gamma_m (t)
\end{pmatrix},
\ee
with each $\gamma_i:I\to\R$.

When the interval $I$ is understood, we simply write $\gamma$ instead of $(I,\gamma)$. We need to distinguish between the curve $\gamma$ as a mapping $I\to\sfE$ and the trace of $\gamma$, i.e., the image set $\abs{\gamma} :=\gamma(I)\subset\sfE$.

\begin{definition}
Let $(\sfE, \n{\cdot}_\sfE)$ be a real Banach space and $I\subset\R$ an open interval. Suppose that $\gamma:I \to \sfE$ is a curve and $t_0\in I$. The curve $\gamma$ is called differentiable in $t_0$ if there exists a linear mapping $v:\R\to\sfE$ such that
\[
\lim_{h\to 0} \frac{\n{\gamma(t_0+h) - \gamma(t_0) - v(h)}_\sfE}{h} = 0.
\]
\end{definition}
We note that if a curve $\gamma:I\to\sfE$ is differentiable at a point $t_0$ then the derivative is uniquely determined. 

The linear space
\[
L(\R, \sfE) := \left\{v:\R\to\sfE : \text{$v$ is linear}\right\}
\]
is isomorphic to $\sfE$ and, as every linear mapping $v:\R\to\sfE$ is continuous, $L(\R, \sfE)$ is naturally a Banach space. The isomorphism between the Banach spaces $\sfE$ and $L(\R, \sfE)$ allows us to identify the linear mapping $v:\R\to\sfE$ with the element $v(1)\in \sfE$. The vector $v(1)$ is called the derivative of the curve $\gamma$ at $t_0$ and is denoted by $\gamma'(t_0)$. 
\begin{definition}
Let $(\sfE, \n{\cdot}_\sfE)$ be a real Banach space and $I\subset\R$ an interval. Further, let $\gamma:I\to\sfE$ be a curve. $\gamma$ is called continuously differentiable on $I$ if 
\begin{enumerate}
\item $\gamma$ is differentiable at every point $t\in\Int(I) $, the interior of $I$; 
\item the mapping
\[
I\to L(\R,\sfE), \quad t\mapsto \gamma'(t)
\]
is continuous on $\Int(I)$ and extends continuously to $I$. 
The family of all continuously differentiable curves $\gamma:I\to\sfE$ is denoted by $\Gamma^1(I,\sfE)$.
\end{enumerate}
A continuously differentiable curve $\gamma\in \Gamma^1 (I,\sfE)$ is called \emph{regular} if $\gamma'\neq 0$ on $I$.
\end{definition}

Now consider the case of a Banach algebra $\sfA:=(\sfA, +, \cdot)$, i.e., when $(\sfA, +)$ is a vector space with a norm $\n{\cdot}$ and endowed with a product $\cdot:\sfA\times\sfA$ satisfying
\begin{enumerate}
\item $(\sfA, +, \n{\cdot})$ is a Banach space;
\item $(\sfA, +, \cdot)$ is an associate $\R$-algebra;
\item $\forall\,a_1,a_2\in \sfA: \n{a_1\cdot a_2} \leq \n{a_1}\n{a_2}$.
\end{enumerate}
A Banach algebra $\sfA$ is called \emph{unital} if it has a multiplicative identity.
\begin{example}\label{ex1}
For a unital Banach algebra $\sfA$, we obtain for any $a\in \sfA$ a curve $\gamma : I \to \sfA$ defined by $t\mapsto \exp t \cdot a$, where $\exp$ denotes the exponential function in the Banach algebra $\sfA$. 

As a special case, we consider a finite dimensional Banach algebra $\sfA$ such as $\K^m$ with $\K \in \{\R,\C\}$, for some $m\in \N$, endowed with some norm. Then, $\Hom (\sfA,\sfA)$, the set of self-maps $\sfA\to\sfA$ with multiplication $\cdot$ defined as composition of maps $\circ$, is a Banach algebra when equipped with the operator norm and for each $L\in \Hom(\sfE,\sfE)$, we have curves $\gamma: I\to \Hom(\sfE,\sfE)$, $t\mapsto \exp t\cdot L$ yielding curves $I\to \sfE$, $t\mapsto (\exp t\cdot L)(v)$, for each $v\in \sfE$.
\end{example}

For our later purposes, we require the following \emph{Generalized Mean Value Theorem for Curves}.
\begin{theorem}\label{thm3.5}
Assume that the curve $\gamma:[a,b]\to\sfE$ is differentiable on $(a,b)$ and $\{\gamma'(t) : t\in (a,b)\}$ is bounded. Define
\[
\n{\gamma'}_\sfE := \sup\{\n{\gamma'(t) : t\in (a,b)}\}.
\]
Then,
\be\label{mwt}
\n{\gamma(b) - \gamma(a)}_\sfE \leq \n{\gamma'}_\sfE (b-a).
\ee
\end{theorem}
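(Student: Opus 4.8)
The plan is to reduce the vector-valued statement to the classical scalar Mean Value Theorem by means of the Hahn--Banach theorem. First I would dispose of the trivial case $\gamma(b)=\gamma(a)$, where the left-hand side of \eqref{mwt} vanishes and there is nothing to prove. Assuming $\gamma(b)\neq\gamma(a)$, I would invoke Hahn--Banach to produce a norming functional, that is, a continuous linear functional $\varphi\in\sfE^{*}$ with $\n{\varphi}=1$ in the operator norm and $\varphi\bigl(\gamma(b)-\gamma(a)\bigr)=\n{\gamma(b)-\gamma(a)}_\sfE$. Here I use that $\sfE$ is a real Banach space, so that $\varphi$ is real-valued and the classical theorem applies without passing to real parts.

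Next I would pass to the scalar function $g:[a,b]\to\R$, $g(t):=\varphi(\gamma(t))$. Because $\varphi$ is linear and continuous, $g$ is differentiable on $(a,b)$ with $g'(t)=\varphi(\gamma'(t))$, and---this is where the standing hypotheses on a curve enter---$g$ is continuous on the closed interval $[a,b]$, since $\gamma$ is continuous there by the very definition of a curve. With continuity on $[a,b]$ and differentiability on $(a,b)$ in hand, the classical Mean Value Theorem for real functions yields a point $\xi\in(a,b)$ with $g(b)-g(a)=g'(\xi)\,(b-a)$.

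It then remains to chain the estimates. By the choice of $\varphi$ one has $g(b)-g(a)=\varphi(\gamma(b)-\gamma(a))=\n{\gamma(b)-\gamma(a)}_\sfE$, while $g'(\xi)=\varphi(\gamma'(\xi))\le\n{\varphi}\,\n{\gamma'(\xi)}_\sfE\le\n{\gamma'}_\sfE$, using $\n{\varphi}=1$ and the definition of $\n{\gamma'}_\sfE$ as the supremum of $\n{\gamma'(t)}_\sfE$ over $(a,b)$. Combining these identities and inequalities with $b-a>0$ produces exactly \eqref{mwt}.

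The step I expect to require the most care is not any single estimate but the role of the endpoints: the theorem assumes differentiability only on the open interval $(a,b)$, and the conclusion genuinely fails without control of $\gamma$ at $a$ and $b$. I would therefore be explicit that continuity on all of $[a,b]$ is supplied by the standing assumption that a curve is continuous, which is precisely what makes $g$ eligible for the classical Mean Value Theorem. As an alternative that avoids Hahn--Banach entirely, I could argue directly: fix $\eps>0$, consider the set of $t\in[a,b]$ for which $\n{\gamma(t)-\gamma(a)}_\sfE\le(\n{\gamma'}_\sfE+\eps)(t-a)+\eps$, use differentiability at its supremum to show that this supremum equals $b$, and then let $\eps\to0$. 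I would, however, favor the Hahn--Banach route for its brevity.
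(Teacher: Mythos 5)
Your proof is correct. The paper in fact states Theorem \ref{thm3.5} without any proof (it is invoked as a known ``Generalized Mean Value Theorem for Curves''), so there is nothing to compare against; your Hahn--Banach reduction to the scalar Mean Value Theorem is the standard and canonical argument for this statement. All the delicate points are handled properly: the norming functional exists because $\sfE$ is a real Banach space; $g=\varphi\circ\gamma$ inherits differentiability on $(a,b)$ with $g'(t)=\varphi(\gamma'(t))$ directly from the paper's definition of the derivative of a curve (apply $\varphi$ to the remainder term and use $\n{\varphi}=1$); and continuity of $g$ on all of $[a,b]$ is indeed supplied by the standing requirement that a curve be continuous, which is exactly what licenses the classical MVT. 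Your fallback argument avoiding Hahn--Banach is also a legitimate alternative, but the functional-analytic route is the cleaner one here.
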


\section{Fractal Interpolation over Curves}\label{sec5}
We briefly recall some basics from fractal interpolation theory and the theory of fractal functions. 

To this end, let $\sfU$ and $\sfV$ be open subsets of Banach spaces $\sfE$ and $\sfF$, respectively. A mapping $f:\sfU\to\sfV$ is called a \emph{$C^1$-diffeomorphism} if $f$ is a bijection, Fr\'echet differentiable on $\sfU$, and the inverse of $f$, $f^{-1}$, is Fr\'echet differentiable on $\sfV$. In a similar way, one defines higher order diffeomorphisms $\sfU\to\sfV$. 

The collection of all $C^\alpha$-diffeomorphisms from $\sfU\to\sfV$ with $\alpha\in \N_0\cup\{\infty\}$ is denoted by $\Diff^\alpha (\sfU,\sfV)$. In case, $\sfU := \sfE := \sfF$, we simply write $\Diff^\alpha (\sfE)$. 

If $\sfX\subset\sfE$ then $f:\sfX\to\sfF$ is called a $C^\alpha$-diffeomorphism on $\sfX$, in symbols $f\in \Diff^\alpha(\sfX,\sfF)$, if there exists an open $\sfU\subset\sfE$ with $\sfX\subset\sfU$ and a $C^\alpha$-diffeomorphism $g:\sfU\to\sfF$ such that $f = g\lvert_\sfX$. 

\subsection{Fractal Interpolation in General}
Let $\sfX$ be a nonempty bounded subset of a Banach space $\sfE$. Suppose we are given a finite family 
\[
h:= \{h_i\in \Diff^\alpha (\sfX)  : i = 1, \ldots, n\} 
\]
of $C^\alpha$-diffeomorphisms generating a partition $\Pi(h)$ of $\sfX$ in the sense that
\be\label{c1}
\sfX = \coprod_{i=1}^n h_i(\sfX),
\ee
where $\coprod$ denotes the disjoint union of sets. For simplicity, we set $\sfX_i := h_i(\sfX)$.\\

Recall that a mapping $f:\sfE\to\sfF$ is called \emph{affine} if $f - f(0)$ is linear.

\begin{definition}
A partition $\Pi(h)$ of $\sfX$ is called {nonlinear} if the maps generating $\Pi(h)$ are not affine; otherwise linear.
\end{definition}

In case of a nonlinear partition, we may express \eqref{c1} also as follows: The IFS $(\sfX, h)$ has as its attractor $\sfX$ which consists of finitely many disjoint \emph{nonlinear} images of itself. Note that this extends and generalizes known methodologies. For an introduction to fractal interpolation over nonlinear partitions and related topics, we refer the interested reader to \cite{M22}.

In the following, we consider the partition functions $h_i$ as either being linear or nonlinear. A more precise specification will be provided when necessary.
\\

One of the goals of fractal interpolation is the construction of a \emph{global} function 
\be\label{psipart}
\psi:\sfX = \coprod\limits_{i=1}^n \sfX_i\to\sfF
\ee
belonging to some prescribed function space $\sF$ and satisfying $n$ functional equations of the form
\be\label{psieq}
\psi (h_i (x)) = q_i (x) + s_i (x) \psi (x), \quad\text{$x\in\sfX$, $i\in \N_n$},
\ee
where for each $i\in\N_n$, $q_i\in\sF$ and $s_i$ is chosen such that $s_i\cdot\psi\in\sF$. In other words, the global solution is pieced together in a prescribed manner from copies of itself on the subsets $\sfX_i = h_i(\sfX)$ defined by the partition $\Pi(h)$.

We refer to \eqref{psipart} and \eqref{psieq} as the \emph{fractal interpolation problem} and to $\psi$ as a \emph{fractal function}. The latter were first constructed in \cite{B86} in the context of interpolating a given set of real-valued data in $\R^2$.

One possible way to solve the fractal interpolation problem is to consider \eqref{psieq} as the fixed point equation for an associated affine operator acting on an appropriately defined or prescribed function space.
\subsection{Partitioning Regular Curves in Banach Spaces}
Let $\gamma\in \Gamma^1 (I,\sfE)$ be a regular curve where $I := [0,1]\subset\R$. The emphasis $I = [0,1]$ is only a mild restriction as any other half-open interval $[a,b]$ with $a < b$ can be obtained from $I$ via the orientation-preserving diffeomorphism $t\mapsto (b-a) t + a$.

Note that as $\gamma\in \Gamma^1 (I,\sfE)$, $|\gamma| = \gamma(I)$ is compact in $\sfE$ and thus (totally) bounded. In particular, $\n{\gamma'}$ is bounded and Theorem \ref{thm3.5} is applicable.
\begin{remark}
Strictly speaking, a curve is an equivalence class of parametrized curves. The trace of a curve, however, is uniquely defined, as equivalently parametrized curves have the same trace. Moreover, the terms initial and terminal point of a curve are independent of its parametrization and so is its length. In what follows, we will always use the arc length parametrization of a curve
\begin{gather*}
\sigma : I \to [0, \ell(\gamma)],\\
\sigma := \int_0^t \n{\gamma'(\tau)} d\tau,
\end{gather*}
where $\ell(\gamma)$ denotes the (finite) length of $\gamma$.
\end{remark}
Suppose we generate a partition of $|\gamma|\in\sH(\R)$ by means of finitely many (prescribed) set-valued partition functions $H_i: \sH(\R)\to \sH(\R)$ which are $C^\alpha$-diffeomorphisms such that 
\begin{gather}
H_i(|\gamma|) =: |\gamma_i|, \quad i\in \N_n,\\
H_1(\{\gamma(0)\}) = \{\gamma(0)\} =: p_0, \quad H_n(\{\gamma(1)\}) = \{\gamma(1)\} =:p_n,\\
H_i (\{\gamma(0)\}) =: p_i \in |\gamma_i|,\quad H_i (\{\gamma(1)\}) =: p_{i+1} = H_{i+1} (\{\gamma(0)\})\in |\gamma_{i+1}|, \quad i\in \N_{n-1}.\label{eq4.6}
\end{gather}
Note that \eqref{eq4.6} means that the intersection between adjacent traces is one-elemental:
\[
|\gamma_i| \cap |\gamma_{i+1}| = \{p_i\},\quad i\in \N_{n-1}.
\]
As $\gamma\in \Gamma^1 (I,\sfE)$ and regular, the mapping $t\mapsto \sigma = \int\limits_0^t \n{\gamma'(\tau)} d\tau$ is a $C^1$-diffeomor\-phism. Hence, to every $p_i$, $i\in \N_{n-1}$, corresponds a unique $\sigma_i\in [0,\ell(\gamma)]$ and therefore a unique $t_i\in I$. The set $\{t_j : j \in \N_{n-1}\cup\{0\}\}$ defines a partition of $I$ into subintervals $[t_j, t_{j+1}]$ with $|\gamma_{j+1}| = \gamma([t_{j}, t_{j+1}])$. Two adjacent subintervals of the above form have only the point $\{t_{j+1}\}$ in common thus allowing $I$ to be written as the disjoint union
\be\label{eq4.7}
I = \coprod_{i=1}^{n-1} [t_{i-1},t_i) \sqcup [t_{n-1},t_n] =: \coprod_{i=1}^n I_i.
\ee

Conversely, we could partition the interval $I$ via finitely many (linear or nonlinear) partition functions $h_i :I\to I_i$ into subintervals $I_i :=[t_{i-1},t_i]$ generating a partition of $I$ of the form \eqref{eq4.7}. 
As $|\gamma| = \gamma(I)$ and $|\gamma_i| = \gamma(I_i)$ we obtain
\begin{align*}
(\gamma\circ h_i)(I) = \gamma(h_i (I)) = \gamma(I_i) = |\gamma_i| = H_i (|\gamma|) = H_i (\gamma(I)) = (H_i\circ\gamma)(I),
\end{align*}
and, thus, 
\be\label{eq4.8}
\gamma\circ h_i = H_i\circ\gamma, \quad i\in\N_n.
\ee

Hence, for a regular $\gamma\in \Gamma^1 (I,\sfE)$,
\be\label{gammapart}
|\gamma| = \bigcup_{i=1}^n |\gamma\circ h_i| =: \bigcup_{i=1}^n |\gamma_i|,
\ee
with $|\gamma_i| \cap |\gamma_{i+1}|$ being one-elemental. We can obtain a disjoint partition of $|\gamma|$ by using
\[
|\gamma| = \coprod_{i=1}^{n-1} |\gamma_i|\setminus\{p_i\} \sqcup |\gamma_n|
\]
Clearly, in each of the two cases, $|\gamma_i|$ is compact in $\sfE$ and thus bounded. 
\subsection{The Fractal Interpolation Problem for Curves}
In this subsection, we pose and solve the fractal interpolation problem for regular curves in Banach spaces. More precisely, given the interval $I = [0,1)$, a regular $\gamma\in \Gamma^1 (I,\sfE)$, and a partition $\Pi(h)$ of $|\gamma|$ of the form \eqref{gammapart}, we like to construct a unique function 
\be\label{psigamma}
\psi: |\gamma| = \coprod\limits_{i=1}^n |\gamma_i|\to\sfF
\ee
belonging to some prescribed function space $\sF$ and satisfying $n$ functional equations of the form
\be\label{psieqgamma}
\psi (\gamma_i) = q_i\circ\gamma + (s_i \circ\gamma)\cdot (\psi\circ\gamma), \quad i\in \N_n,
\ee
where for each $i\in\N_n$, $q_i\in\sF$ and $s_i$ is chosen such that $s_i\cdot\psi\in\sF$, i.e., the global solution of \eqref{psieqgamma} is pieced together in a prescribed manner from copies of itself on the subcurves $|\gamma_i| $ defined by the partition $\Pi(h)$. The unique solution of \eqref{psieqgamma} will be termed a \emph{fractal function on $\gamma$}.

Let $(\sfX, d)$ be a metric space and let $\Lip (\sfX, \sfF)$ consist of all bounded Lipschitz functions $\sfX\to\sfF$ endowed with the norm
\be
\n{f} := \n{f}_\infty + L(f).
\ee
Here, $\n{\cdot}_\infty$ denotes the supremum norm on continuous functions $\sfX\to\sfF$. Under this norm and the point-wise product, $\Lip(\sfX,\sfF)$ becomes a unital commutative Banach algebra and is an example of a \emph{Lipschitz algebra}. (Cf., for instance, \cite{J70,Sh63,W18}.)

In what follows, we regard $|\gamma|$ as a (compact) subset of the complete metric space $(\sfE, d)$ where $d$ is the metric induced by the norm $\n{\cdot}_\sfE$. The natural function space $\sF$ to consider here is then the Lipschitz algebra $\Lip (|\gamma|,\sfA)$ with a Banach algebra $\sfA$.

On the Lipschitz algebra $\Lip (|\gamma|,\sfA)$, we define an affine operator $T$, called a Read-Bajractarevi\'c (RB) operator, as follows. Given any $f\in\Lip (|\gamma|,\sfA)$, set
\be\label{eq4.12}
Tf (\gamma_i) := q_i (\gamma) + s_i(\gamma)\cdot f(\gamma), \quad i\in \N_n,
\ee
where $q_i, s_i\in \Lip (|\gamma|,\sfA)$. Equivalently, we may express \eqref{eq4.12} in the form
\be
T (f\circ\gamma) = \sum_{i=1}^n (q_i\circ\gamma)\circ h_i^{-1} \,\one_{I_i} + \sum_{i=1}^n (s_i\circ\gamma)\circ h_i^{-1}\cdot (f\circ\gamma)\circ h_i^{-1} \,\one_{I_i}
\ee
Moreover, we require that the following compatibility (join-up) conditions hold:
\begin{gather}
T f(p_0) = f(p_0), \quad T f(p_n) = f(p_n),\label{eq4.14}\\
T f(p_i-) = T f(p_i+), \;\; i \in \N_{n-1}.\label{eq4.15}
\end{gather}
Eqns. \eqref{eq4.14} and \eqref{eq4.15} can also be expressed in the form
\begin{gather}
f(p_0) = \frac{q_1(p_0)}{1-s_1(p_0)}, \quad f(p_n) = \frac{q_n(p_n)}{1-s_n(p_n)},\\
q_i(p_n) + \frac{s_i(p_n)\cdot q_n(p_n)}{1-s_n(p_n)} = q_{i+1}(p_0) +  \frac{s_{i+1}(p_0)\cdot q_1(p_0)}{1-s_1(p_0)}, \;\; i \in \N_{n-1}.
\end{gather}

Clearly, $Tf\in \Lip (|\gamma|,\sfA)$, i.e., $Tf$ is bounded and continuous, and $\n{Tf} < \infty$.
\begin{theorem}\label{thm4.3}
The RB operator defined in \eqref{eq4.12} and satisfying the compatibility conditions \eqref{eq4.14} and \eqref{eq4.15} is contractive on the Lipschitz algebra $\Lip(|\gamma|, \sfA)$ provided that
\be
L(T) := \max\left\{\max_{i\in \N_n} \n{s_i}_\infty, \max_{i\in\N_n} L(s_i) \cdot \n{\gamma'}_{\sfE}\right\} < 1.
\ee
\end{theorem}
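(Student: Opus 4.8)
The plan is to exploit that $T$ is affine, so that for $f,g\in\Lip(|\gamma|,\sfA)$ the difference $Tf-Tg$ depends only on the linear part of $T$ applied to $u:=f-g$. Subtracting the two instances of \eqref{eq4.12} and cancelling the $q_i(\gamma)$ terms gives, on each subcurve $|\gamma_i|$,
\[
(Tf-Tg)(\gamma_i) = s_i(\gamma)\cdot u(\gamma),\qquad i\in\N_n.
\]
Since the claimed constant $L(T)$ is a \emph{maximum} of two quantities, the natural strategy is to bound the two seminorms making up $\n{Tf-Tg}=\n{Tf-Tg}_\infty+L(Tf-Tg)$ separately and diagonally: the supremum part by $(\max_i\n{s_i}_\infty)\,\n{u}_\infty$ and the Lipschitz part by $(\max_i L(s_i)\n{\gamma'}_\sfE)\,L(u)$. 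Adding and using $a\,\n{u}_\infty+b\,L(u)\le \max\{a,b\}\,\n{u}$ then produces exactly the stated factor.

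The supremum estimate is immediate from submultiplicativity of the Banach-algebra norm: on each $|\gamma_i|$ one has $\n{s_i(\gamma)\cdot u(\gamma)}_\sfA\le\n{s_i(\gamma)}_\sfA\,\n{u(\gamma)}_\sfA\le\n{s_i}_\infty\,\n{u}_\infty$, and passing to the supremum over $i\in\N_n$ and over $|\gamma|$ yields $\n{Tf-Tg}_\infty\le(\max_i\n{s_i}_\infty)\,\n{u}_\infty$.

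For the Lipschitz seminorm I would fix $i$ and two points of the same subcurve, written as $\gamma(h_i(t_1))$ and $\gamma(h_i(t_2))$ with $t_1,t_2\in I$, so that the source arguments on the right-hand side are $\gamma(t_1),\gamma(t_2)$, and compute
\[
(Tf-Tg)(\gamma(h_i(t_1))) - (Tf-Tg)(\gamma(h_i(t_2))) = s_i(\gamma(t_1))\,u(\gamma(t_1)) - s_i(\gamma(t_2))\,u(\gamma(t_2)).
\]
Here the two main tools are the product rule for Lipschitz maps into a Banach algebra, $L(vw)\le\n{v}_\infty L(w)+L(v)\n{w}_\infty$, applied to $v=s_i$ and $w=u$, together with Theorem \ref{thm3.5}, which converts a curve increment into a parameter increment via $\n{\gamma(t_1)-\gamma(t_2)}_\sfE\le\n{\gamma'}_\sfE\,|t_1-t_2|$ and is precisely where the factor $\n{\gamma'}_\sfE$ enters. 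Combining them gives the per-piece bound by $(\n{s_i}_\infty L(u)+\n{u}_\infty L(s_i))\,\n{\gamma'}_\sfE\,|t_1-t_2|$.

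I expect this last step to be the main obstacle, for two reasons. First, the self-referential structure forces a comparison of the ambient distance $\n{\gamma(h_i(t_1))-\gamma(h_i(t_2))}_\sfE$ between the two evaluation points on $|\gamma_i|$ with the parameter gap $|t_1-t_2|$ and the source distance $\n{\gamma(t_1)-\gamma(t_2)}_\sfE$ on $|\gamma|$; controlling this pullback through the partition map $h_i$ (equivalently $h_i^{-1}$) without admitting an uncontrolled expansion factor, and reconciling the extra term proportional to $\n{u}_\infty L(s_i)$ produced by the product rule with the clean maximum-type constant, is the delicate point and is where regularity together with the arc-length parametrization must be used. Second, the per-piece estimate must be globalized to all of $|\gamma|$ for pairs of points lying on \emph{different} subcurves; this is where the join-up conditions \eqref{eq4.14}--\eqref{eq4.15} are invoked, guaranteeing that $Tf$ is genuinely continuous across the nodes $p_i$ so that increments can be routed through the shared endpoints. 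Once both points are settled, taking the maximum over $i\in\N_n$ and combining with the supremum estimate yields $\n{Tf-Tg}\le L(T)\,\n{f-g}$ with $L(T)<1$.
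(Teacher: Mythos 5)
Your overall strategy coincides with the paper's: reduce to the linear part acting on $u:=f-g$, bound the supremum norm and the Lipschitz seminorm separately, and invoke Theorem \ref{thm3.5} to introduce the factor $\n{\gamma'}_\sfE$. The supremum estimate is correct and is exactly the paper's \eqref{estimate1}. The genuine gap is in the Lipschitz-seminorm step, and you name it yourself without resolving it. Once you use the (correct) Leibniz bound $L(s_i\,u)\le \n{s_i}_\infty L(u)+L(s_i)\n{u}_\infty$, the resulting bound on $L(Tu)$ necessarily contains the cross term $\max_i L(s_i)\,\n{\gamma'}_\sfE\,\n{u}_\infty$ alongside a term $\max_i\n{s_i}_\infty\,L(u)$, so your closing inequality $a\,\n{u}_\infty+b\,L(u)\le\max\{a,b\}\,\n{u}$ is no longer applicable with $a=\max_i\n{s_i}_\infty$ and $b=\max_i L(s_i)\n{\gamma'}_\sfE$: adding the two estimates yields at best a contraction factor of the form $\max_i\n{s_i}_\infty+\max_i L(s_i)\n{\gamma'}_\sfE$ (a \emph{sum}, the standard constant for RB operators on Lipschitz-type spaces), not the stated \emph{maximum}. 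Declaring this reconciliation ``the delicate point'' is not a proof of it. For comparison, the paper's proof obtains the clean maximum only by estimating $\n{s_i(\gamma(t))u(\gamma(t))-s_i(\gamma(s))u(\gamma(s))}_\sfA$ by the product $\n{s_i(\gamma(t))-s_i(\gamma(s))}_\sfA\,\n{u(\gamma(t))-u(\gamma(s))}_\sfA$, i.e.\ ``difference of products $\le$ product of differences,'' which is not a valid inequality in a Banach algebra; so the step at which you stall is precisely the step the paper does not justify.

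The other two difficulties you raise are also real and also left open. First, $L(Tu)$ must be measured against the distance $\n{\gamma(h_i(t))-\gamma(h_i(s))}_\sfE$ between the points where $Tu$ is evaluated, whereas your (and the paper's) estimate controls the increment by $\n{\gamma(t)-\gamma(s)}_\sfE$ between the source points; converting one into the other requires a lower bound $\n{\gamma(h_i(t))-\gamma(h_i(s))}_\sfE\ge c\,\n{\gamma(t)-\gamma(s)}_\sfE$, i.e.\ a bi-Lipschitz (chord-arc) property of the regular curve together with control of $h_i$, which neither you nor the paper supplies. Second, the globalization to points on different subcurves via the join-up conditions \eqref{eq4.14}--\eqref{eq4.15} is only sketched: routing an increment through the contact points $p_j$ by the triangle inequality requires that the sum of the intermediate chord lengths be comparable to the chord length between the original two points, which again is a chord-arc condition, not a consequence of continuity alone. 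In short, your proposal is an accurate map of where the proof is hard, but it does not cross any of the three gaps it identifies, and the first of them (the cross term from the product rule) actually shows that the theorem's constant $L(T)$, as a maximum rather than a sum, is not delivered by this line of argument.
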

\begin{proof}
It suffices to consider the linear part $T-T(0)$ of $T$. For this purpose, we set $h := f - g$, for $f,g\in \Lip (|\gamma|,\sfA)$.

Now, let $t\in I$. Then, there exists an $i\in \N_n$ such that $t\in I_i$ and thus
\begin{align*}
\n{T h(\gamma(t))}_\sfA \leq \n{s_i(\gamma\circ h_i^{-1}(t))}_\sfA\cdot \n{h(\gamma\circ h_i^{-1}(t))}_\sfA.
\end{align*}
Taking the supremum over $t$ yields
\be\label{estimate1}
\n{T h}_\infty \leq \max_{i\in \N_i}\n{s_i}_\infty\cdot \n{h}_\infty.
\ee
Furthermore, for $s,t\in I$,
\begin{align*}
\n{T h(\gamma(t)) - T h(\gamma(s))}_\sfA & \leq \n{s_i(\gamma(t)) - s_i(\gamma(s))}_\sfA \, \n{h(\gamma(t)) - h(\gamma(s))}_\sfA\\
& \leq L(s_i) \n{\gamma(t) - \gamma(s)}_\sfE \, L(h)  \n{\gamma(t) - \gamma(s)}_\sfE\\
& \leq \max_{i\in \N_n} L(s_i) \,\n{\gamma'}_\sfE\, L(h)\,\n{\gamma(t) - \gamma(s)}_\sfE,
\end{align*}
where we used Theorem \eqref{thm3.5} to obtain the last inequality. Hence, 
\be\label{estimate2}
L(T h) \leq \max_{i\in \N_n} L(s_i) \,\n{\gamma'}_\sfE\,L(h).
\ee
The inequalities \eqref{estimate1} and \eqref{estimate2} now yield the claim.
\end{proof}
Theorem \eqref{thm4.3} yields the following immediate corollary.
\begin{corollary}
The RB operator defined in \eqref{eq4.12} and satisfying the compatibility conditions \eqref{eq4.14} and \eqref{eq4.15} has a unique fixed point $\psi\in \Lip(|\gamma|,\sfA)$. This fixed point obeys the self-referential equation
\[
\psi (\gamma_i) := q_i (\gamma) + s_i(\gamma)\cdot \psi(\gamma), \quad i\in \N_n,
\]
and thus is the unique solution to the fractal interpolation problem \eqref{psipart} and \eqref{psieq} in the present setting.

Moreover, $\psi : |\gamma|\to\sfA$ can be obtained by the iterative procedure
\begin{gather*}
\psi_0 \in \Lip(|\gamma|,\sfA)\text{ chosen arbitrarily.};\\
\psi_k := T\psi_{k-1}, \;k\in \N,
\end{gather*}
with
\[
\psi = \lim_{k\to\infty} \psi_k,
\]
where the limit is taken in the norm topology of $\Lip(|\gamma|, \sfA)$.

Furthermore, one has the error estimate
\[
\n{\psi - \psi_k} \leq \frac{L(T)^k}{1-L(T)}\,\n{T\psi_0 - \psi_0},\quad k\in \N.
\]
\end{corollary}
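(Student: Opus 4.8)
The plan is to recognize this statement as a direct consequence of the Banach Fixed Point Theorem applied to the operator $T$. Three ingredients are required: that the ambient space is complete, that $T$ maps it into itself, and that $T$ is a contraction. The first holds because $\Lip(|\gamma|,\sfA)$ is a unital commutative Banach algebra, hence in particular a complete normed space. The second was already established immediately before Theorem~\ref{thm4.3}, where it is verified that $Tf\in\Lip(|\gamma|,\sfA)$ whenever $f\in\Lip(|\gamma|,\sfA)$; here the compatibility conditions \eqref{eq4.14} and \eqref{eq4.15} are precisely what guarantee that the piecewise definition \eqref{eq4.12} agrees at the junction points $p_i$ and therefore defines a genuine single-valued continuous function on all of $|\gamma|$. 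The third ingredient is supplied verbatim by Theorem~\ref{thm4.3}, which asserts $L(T)<1$ under the stated hypothesis.

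With these in place, the Banach Fixed Point Theorem yields a unique $\psi\in\Lip(|\gamma|,\sfA)$ with $T\psi=\psi$. Unwinding the definition \eqref{eq4.12} of $T$ at this fixed point gives, for each $i\in\N_n$,
\[
\psi(\gamma_i)=(T\psi)(\gamma_i)=q_i(\gamma)+s_i(\gamma)\cdot\psi(\gamma),
\]
which is exactly the self-referential system. Hence $\psi$ solves the fractal interpolation problem \eqref{psipart}--\eqref{psieq} in the present setting, and uniqueness of the fixed point immediately gives uniqueness of the solution.

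For the constructive part, I would invoke the standard iteration from the proof of the Banach Fixed Point Theorem: starting from any $\psi_0\in\Lip(|\gamma|,\sfA)$ and setting $\psi_k:=T\psi_{k-1}$, the sequence $(\psi_k)$ is Cauchy in the $\Lip$-norm and converges to $\psi$. The error estimate then follows from a telescoping argument together with the geometric series. Concretely, contractivity gives $\n{\psi_{k+1}-\psi_k}\le L(T)\,\n{\psi_k-\psi_{k-1}}\le L(T)^k\,\n{T\psi_0-\psi_0}$, and summing over the tail,
\[
\n{\psi-\psi_k}\le\sum_{j\ge k}\n{\psi_{j+1}-\psi_j}\le\Bigl(\sum_{j\ge k}L(T)^j\Bigr)\n{T\psi_0-\psi_0}=\frac{L(T)^k}{1-L(T)}\,\n{T\psi_0-\psi_0}.
\]

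Since Theorem~\ref{thm4.3} already carries out the analytic work of establishing contractivity, no genuinely hard step remains; the result is essentially a packaging of the contraction mapping principle. The only point deserving attention is the well-definedness at the junction points $p_i$, where two adjacent branches of the definition of $Tf$ overlap. This is exactly what the compatibility conditions \eqref{eq4.14} and \eqref{eq4.15} resolve, and it is what makes $T$, and hence its fixed point $\psi$, a legitimate element of $\Lip(|\gamma|,\sfA)$ rather than merely a collection of piecewise data. Everything else is routine.
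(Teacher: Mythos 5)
Your proposal is correct and follows the same route as the paper, which simply cites the Banach Fixed Point Theorem and its constructive proof; you merely spell out the three standard hypotheses (completeness, self-mapping via the compatibility conditions, contractivity from Theorem~\ref{thm4.3}) and the telescoping estimate in more detail. Nothing further is needed.
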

\begin{proof}
The statements are straight-forward consequences of the Banach Fixed Point Theorem and its constructive proof.
\end{proof}
\begin{definition}
The unique fixed point $\psi$ of the RB operator \eqref{eq4.12} is called an $\sfA$-valued fractal function over the curve $\gamma$.
\end{definition}
The next examples show the flexibility and versatility of the new concept introduced above.
\begin{example}
In the case $\sfE:= \R^m$, we can regard a curve $\gamma:I\to\R^m$ as the vector-valued function \eqref{vecval}. Considering $\R^m$ also as a metric space $(\R^m,d)$ with the metric $d$ given by $d(x,y) = \n{x-y} = \sqrt{\sum\limits_{j=1}^m (x_j-y_j)^2}$ and taking for $\sfA$ the commutative unital Banach algebra $\R^k$ with norm 
\[
\n{x}_\sfA := \max_{l\in \N_k} |x_l|,\quad x = (x_1,\ldots, x_k)^\top,
\]
where ${}^\top$ denotes the transpose, and multiplication defined by
\[
x\cdot y := (x_1 y_1, \ldots, x_k y_k)^\top,
\]
the elements of the Lipschitz algebra $\Lip(|\gamma|, \R^k)$ are then vector-valued functions of $m$ variables of the form
\begin{gather*}
f: |\gamma|\subset\R^m\to \R^k,\\
f(\gamma(t)) = \begin{pmatrix} f_1(\gamma_1(t),\ldots, \gamma_m(t))\\ \vdots \\f_k(\gamma_1(t),\ldots, \gamma_m(t))
\end{pmatrix},
\end{gather*}
where $f_l:\R^m\to\R$, $l\in \N_k$.
\end{example}
\begin{example}
Here we take again $\sfE := \R^m$ but for $\sfA$ we choose $\R^{k\times k}$, the collection of all $k\times k$-matrices with entries in $\R$ or $\C$. As a norm on $\R^{k\times k}$ we select a submultiplicative matrix norm such as, for instance, a Schatten $p$-norm,
\[
\n{A}_p := \left(\sum_{l=1}^k \sigma_l^p (A)\right)^{1/p}, \quad A\in \R^{k\times k}, \;\; p\in [1,\infty],
\]
where the $\sigma_l$ denote the singular values of $A$. Multiplication is the usual multiplication of matrices. Then $\R^{k\times k}$ becomes a unital noncommutative Banach algebra.

In this setting, the elements of the Lipschitz algebra $\Lip(|\gamma|, \R^{k\times k})$ are matrix-valued functions of the form
\begin{gather*}
f: |\gamma|\subset\R^m\to \R^{k\times k},\\
f(\gamma(t)) = \begin{pmatrix} f_{11}(\gamma(t)) & \cdots & f_{1k}(\gamma(t))\\ \vdots & \ddots & \vdots\\
f_{k1}(\gamma(t)) & \cdots & f_{kk}(\gamma(t))
\end{pmatrix},
\end{gather*}
with $f_{ij}:\R^m\to\R$, $i,j\in \N_k$.
\end{example}
\section{Relation to Iterated Function Systems}
In this section, the relation between the graph $G(\psi)$ of the fixed point $\psi$ of the operator $T$ and the attractor of the associated contractive IFS is described. 

To this end, note that there are two IFSs whose attractors are the interval $I$ and the trace $|\gamma|$ of a curve $\gamma:I\to\sfE$, respectively. The former is given by $(I, h)$ with $h:=\{h_i : i\in \N_n\}$ and the latter by $(|\gamma|, H)$ where $H:=\{H_i : i\in \N_n\}$. Both IFSs have the same code space, namely, $\N_n^\infty$, and as \eqref{eq4.8} holds, we see that the fractal transformation $\fF : I \to |\gamma|$ is indeed a homeomorphism. In other words, for the purposes of this section, it does not matter which IFS we consider for the first variable in $G(\psi)$. In order to keep the presentation simple, we choose the IFS $(|\gamma|, H)$ and leave the other case to the interested reader.

Next, consider the product Banach space $\sfE\times\sfA$ endowed with the product norm $\n{\cdot}_\sfE + \n{\cdot}_\sfA$. For $|\gamma|\in\sH(\sfE)$ and $\sfY\in\sH(\sfA)$, let mappings $w_i:|\gamma|\times\sfY\to|\gamma|\times\sfA$ be defined by
\be\label{wn}
w_i (x, y) := (H_i (x), q_i (x) + s_i(x)\cdot y), \quad i\in\N_n,
\ee
and required to map into $|\gamma|\times\sfY$. (By Proposition 27 in \cite{M16},  this is always possible.) Here, $q_i$ and $s_i$ are defined as in Section \ref{sec5}.

Note that the $H_i$ are defined on the compact subset $|\gamma|$ of $\sfE$ and that $H_i(|\gamma|)\cap H_j(|\gamma|)\neq\emptyset$ but one-elemental for $i,j\in \N_n$ with $i\neq j$. However, we insist that $H$ is \emph{non-overlapping} with respect to the IFS $(|\gamma|, H)$ in the sense of \cite[Definition 2.4]{BBHV16} and that at the contact points $\{H_i(|\gamma|)\cap H_j(|\gamma|)\}$ the compatibility conditions \eqref{eq4.14} and \eqref{eq4.15} hold. This will guarantee the existence of a solution $\psi$ (cf. also \cite{SB17}).

To facilitate notation, introduce the mappings 
\begin{gather*}
v_i : \sfE\times\sfY\to\sfA,\\
v_i := q_i + s_i\cdot y,\quad i\in \N_n.
\end{gather*}

We remark that for a fixed $x\in |\gamma|$, arbitrary $y_1,y_2\in \sfY$, and $i\in \N_n$,
\begin{align*}
\n{v_i(x,y_1) - v_i(x,y_2)}_\sfA &= \n{s_i(x) y_1 - s_i(x) y_2}_\sfA\\
& \leq \n{s_i}_\sfA \n{y_1-y_2}_\sfA \leq L(T)  \n{y_1-y_2}_\sfA, 
\end{align*}
i.e., $v_i$ is uniformly contractive in the second variable.

Moreover, for a fixed $y\in \sfY$, arbitrary $x_1, x_2\in |\gamma|$, and $i\in \N_n$,
$v_i$ is also uniformly Lipschitz continuous in the first variable:
\begin{align*}
\n{v_i(x_1, y) - v_i(x_2, y)}_\sfA &\leq \n{q_i(x_1)-q_i(x_2)}_\sfA + \n{y}_\sfA \n{s_i(x_1)-s_i(x_2)}_\sfA\\
& \leq \left(L(q_i) + \n{y}_\sfA L(s_i) \right) \n{x_1-x_2}_\sfA.
\end{align*}
As $y\in \sfY\in \sH(\sfA)$, there exists a nonnegative constant $c_\sfY$ such that $\n{y}_\sfA \leq c_\sfY$. Hence,
\[
\n{v_i(x_1, y) - v_i(x_2, y)}_\sfA \leq\left(L(q_i) + c_\sfY L(s_i) \right) \n{x_1-x_2} _\sfA =: \lambda \n{x_1-x_2}_\sfA,
\]
proving the claim.

Let $\oL (H) := \max\{L(H_i) : i\in\N_n\}$ and let $\vartheta := \frac{1-\oL (H)}{2\lambda}$. Then the mapping $\n{\cdot}_\vartheta : \sfE\times\sfA\to \R_0^+$ given by
\[
\n{\cdot}_\vartheta := \n{\cdot}_\sfE + \vartheta\,\n{\cdot}_\sfF
\]
is a norm on $\sfE\times\sfA$ compatible with the product topology on $\sfE\times \sfA$.

The next theorem can also be found in \cite{BHM14} in a more general setting. Here, we adopt it for our purposes.

\begin{theorem}\label{thm3.1}
Set $\cW := \{w_1, \ldots, w_n\}$. Then, $\cF := (|\gamma|\times\sfY, \cW)$ is a contractive IFS with respect to the norm $\n{\cdot}_\vartheta$ and the graph $G(\psi)$ of the solution to the fractal interpolation problem
\be\label{eq3.8}
\psi(\gamma_i) = q_i(\gamma)+s_i(\gamma),\quad i\in\N_n,
\ee
%
is the unique attractor of the IFS $\cF$. 

Furthermore, if $T$ is the RB operator 
\begin{gather}\label{eq3.9a}
T: \Lip(|\gamma|,\sfA)\to\Lip(|\gamma|,\sfA),\nonumber\\
f\mapsto  q_i\circ\gamma\circ h_i^{-1} + (s_i\circ\gamma\circ h_i^{-1})\cdot (f\circ\gamma\circ h_i^{-1}),\;\;\text{on $\sfX_i$},\; i\in\N_n,
\end{gather}
associated with the fractal interpolation problem \eqref{eq3.8} then
\be\label{GW}
G(T f) = \cF (G(f)),\quad\forall\,f\in \Lip(|\gamma|,\sfA),
\ee
where $\cF$ denotes the set-valued operator \eqref{fixedpoint}.
\end{theorem}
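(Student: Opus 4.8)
The plan is to verify the assertions in their natural logical order: first that $\cF$ is a contractive IFS in the weighted norm $\n{\cdot}_\vartheta$, so that a unique attractor exists; next the intertwining identity \eqref{GW}; and finally to combine these to identify $G(\psi)$ with that attractor. For the contractivity I would fix $i\in\N_n$ and two points $(x_1,y_1),(x_2,y_2)\in|\gamma|\times\sfY$, write $w_i(x,y)=(H_i(x),v_i(x,y))$ with $v_i(x,y)=q_i(x)+s_i(x)\cdot y$, and estimate
\[
\n{w_i(x_1,y_1)-w_i(x_2,y_2)}_\vartheta=\n{H_i(x_1)-H_i(x_2)}_\sfE+\vartheta\,\n{v_i(x_1,y_1)-v_i(x_2,y_2)}_\sfA .
\]
The first summand is at most $\oL(H)\,\n{x_1-x_2}_\sfE$; for the second I would split by the triangle inequality into the variation in the second slot, already bounded by $L(T)\,\n{y_1-y_2}_\sfA$, and the variation in the first slot, already bounded by $\lambda\,\n{x_1-x_2}_\sfE$.

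Collecting terms produces the coefficient $\oL(H)+\vartheta\lambda$ on $\n{x_1-x_2}_\sfE$ and the coefficient $L(T)$ on $\vartheta\,\n{y_1-y_2}_\sfA$. The entire purpose of the definition $\vartheta=\frac{1-\oL(H)}{2\lambda}$ is that $\oL(H)+\vartheta\lambda=\frac{1+\oL(H)}{2}$, which is strictly less than $1$ because the $H_i$ are contractions. Together with $L(T)<1$ from Theorem \ref{thm4.3}, this gives a common contraction factor $c:=\max\{\tfrac{1+\oL(H)}{2},\,L(T)\}<1$, so each $w_i$ is a $\n{\cdot}_\vartheta$-contraction and $\cF$ is a contractive IFS; by the Banach Fixed Point Theorem recalled in Section 2 it has a unique attractor in $\sH(|\gamma|\times\sfY)$.

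For \eqref{GW} I would unravel both sides as subsets of $\sfE\times\sfA$. Using \eqref{eq4.8} in the form $\gamma_i=H_i\circ\gamma$, the defining relation \eqref{eq4.12} reads, for $z=H_i(x)$ with $x\in|\gamma|$, as $Tf(z)=q_i(x)+s_i(x)\cdot f(x)$. Parametrizing each piece $|\gamma_i|$ by $z=H_i(x)$ and using $|\gamma|=\bigcup_i|\gamma_i|$ gives
\[
G(Tf)=\bigcup_{i=1}^n\{(H_i(x),\,q_i(x)+s_i(x)\cdot f(x)):x\in|\gamma|\}=\bigcup_{i=1}^n w_i(G(f))=\cF(G(f)).
\]
Applying this to $f=\psi$ with $T\psi=\psi$ yields the fixed-point identity $G(\psi)=\cF(G(\psi))$. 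To promote this to ``$G(\psi)$ is \emph{the} attractor'' I would invoke that $\cW$ maps $|\gamma|\times\sfY$ into itself (the hypothesis borrowed from \cite{M16}), so the iterates $G(\psi_k)=\cF^{k}(G(\psi_0))$ stay in $\sH(|\gamma|\times\sfY)$ and converge in Hausdorff distance to the attractor; since $\psi_k\to\psi$ in the Lipschitz, hence supremum, norm they also converge to $G(\psi)$, and uniqueness of the limit identifies the two.

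I expect the main obstacle to be the well-definedness of $Tf$, and hence of $G(Tf)$, at the contact points $p_i\in|\gamma_i|\cap|\gamma_{i+1}|$: there the two formulas coming from adjacent pieces must agree, which is exactly what the compatibility conditions \eqref{eq4.14}--\eqref{eq4.15} together with the non-overlapping hypothesis on $H$ secure. I would therefore confirm this consistency at the single overlap points before asserting that the displayed union is genuinely the graph of a well-defined function.
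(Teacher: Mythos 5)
Your proposal is correct, but it takes a different route from the paper in the trivial sense that the paper offers no argument at all: its proof is the single line ``See [BHM14, Theorem 3.7] adapted to the current setting.'' What you have written is essentially the standard proof that the citation encapsulates, carried out in the present setting, and it holds up. The contractivity computation is right: the weight $\vartheta=\frac{1-\oL(H)}{2\lambda}$ gives $\oL(H)+\vartheta\lambda=\frac{1+\oL(H)}{2}<1$ on the first coordinate and $L(T)<1$ on the second, so each $w_i$ contracts in $\n{\cdot}_\vartheta$ (you correctly use, as the paper implicitly does, that the $H_i$ are contractions with $\oL(H)<1$, which follows from the standing convention that $(|\gamma|,H)$ is a contractive IFS). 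The identity $G(Tf)=\bigcup_i w_i(G(f))$ via the parametrization $z=H_i(x)$ and \eqref{eq4.8} is the heart of the matter, and you are right to single out well-definedness at the contact points $p_i$ as the one place where the compatibility conditions \eqref{eq4.14}--\eqref{eq4.15} and the non-overlapping hypothesis are actually used. Your final step could be shortened: once $G(\psi)=\cF(G(\psi))$ and $G(\psi)\in\sH(|\gamma|\times\sfY)$ are established, uniqueness of the fixed point of the contractive set-valued operator $\cF$ on $\sH(|\gamma|\times\sfY)$ identifies $G(\psi)$ with the attractor directly; your iteration argument via $G(\psi_k)=\cF^k(G(\psi_0))$ is a legitimate alternative and has the virtue of showing explicitly that $\psi$ takes values in $\sfY$. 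The net effect is that your version is self-contained where the paper defers to the literature, at the cost of reproving a known general result; the paper's citation buys brevity but hides exactly the two points you identified as delicate (the choice of $\vartheta$ and the consistency at the join points).
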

\begin{proof}
See \cite[Theorem 3.7]{BHM14} adapted to the current setting.
\end{proof}

The above statements are paramount to the commutativity of the diagram below.
\be\label{diagram}
\begin{CD}
|\gamma|\times \sfA @>\cF>> |\gamma|\times \sfA\\
@AAGA                  @AAGA\\
\Lip(|\gamma|,\sfA) @>T>>  \Lip(|\gamma|,\sfA)
\end{CD}
\ee
\nl
where $G$ is the mapping $\Lip(|\gamma|,\sfA)\ni g\mapsto G(g) = \{(x, g(x)) : x\in |\gamma|\}\in \sfE\times \sfA$.

On the other hand, if one assumes that $\cF = (|\gamma|\times\sfA, w_1, w_2, \ldots, w_n)$ is an IFS whose mappings $w_i$ are of the form \eqref{wn} where the functions $H_i$ are linear or nonlinear partition functions $H_i:|\gamma |\to|\gamma_i|$ with non-overlapping attractor $|\gamma|$ and contact points $\{H_i(|\gamma|)\cap H_j(|\gamma|)\}$. Further assume that the mappings $v_i$ are  uniformly Lipschitz continuous in the first variable and uniformly contractive in the second variable. 

Then we can associate with the IFS $\cF$ an RB operator $T$ of the form \eqref{eq3.9a} and thus a fractal interpolation problem $H_i(|\gamma|)\cap H_j(|\gamma|)\neq\emptyset$ with appropriate compatibility conditions. The attractor $A$ of $\cF$ is then the graph $G(\psi)$ of the solution $\psi$ of \eqref{eq3.8}, respectively, the fixed point of $T$.
\section{Future Research Directions}
The above approach may be generalized along different directions. 
\begin{itemize}
\item Instead of considering curves in Banach spaces, one may look in general at Banach or even Fr\'echet submanifolds.
\item The Lipschitz algebras $\Lip (X, \sfF)$ could be replaced by the space of bounded real-valued Lipschitz functions on $X$ which vanish at a given base point $x_0\in X$. The corresponding Lipschitz algebras are then denoted by $\Lip_0(X,\sfF)$.
\end{itemize}
We leave it to the interested reader to pursue these research directions.
\bibliographystyle{amsalpha}

\end{document}